\DeclareMathOperator{\trace}{tr}
\begin{document}

\begin{frontmatter}[classification=text]

\title{A Note on the Power Sums of the Number of Fibonacci Partitions} 

\author[casa]{Carlo Sanna\thanks{C.~Sanna is a member of the INdAM group GNSAGA and of CrypTO, the group of Cryptography and Number Theory of the Politecnico di Torino}}

\begin{abstract}
    For every nonnegative integer $n$, let $r_F(n)$ be the number of ways to write $n$ as a sum of distinct Fibonacci numbers, where the order of the summands does not matter.
	Moreover, for all positive integers $p$ and $N$, let
	\begin{equation*}
		S_{F}^{(p)}(N) := \sum_{n = 0}^{N - 1} \big(r_F(n)\big)^p .
	\end{equation*}
	Chow, Jones, and Slattery determined the order of growth of $S_{F}^{(p)}(N)$ for $p \in \{1,2\}$.
	
	We prove that, for all positive integers $p$, there exists a real number $\lambda_p > 1$ such that
	\begin{equation*}
		S^{(p)}_F(N) \asymp_p N^{(\log \lambda_p) /\!\log \varphi}
	\end{equation*}
	as $N \to +\infty$, where $\varphi := (1 + \sqrt{5})/2$ is the golden ratio.
	Furthermore, we show that
	\begin{equation*}
		\lim_{p \to +\infty} \lambda_p^{1/p} = \varphi^{1/2} .
	\end{equation*}
	Our proofs employ automata theory and a result on the generalized spectral radius due to Blondel and Nesterov.
\end{abstract}
\end{frontmatter}

\section{Introduction}

For every nonnegative integer $n$, let $r_F(n)$ be the number of ways to write $n$ as a sum of distinct Fibonacci numbers, where the order of the summands does not matter.
Formulas for $r_F(n)$ were proved by Carlitz~\cite{MR0236094}, Robbins~\cite{MR1394758}, Berstel~\cite{MR1922290}, Ardila~\cite{MR2093874}, Weinstein~\cite{MR3499711}, and Chow--Slattery~\cite{MR4235264}.

For all positive integers $p$ and $N$, define the power sum
\begin{equation*}
	S_{F}^{(p)}(N) := \sum_{n = 0}^{N - 1} \big(r_F(n)\big)^p .
\end{equation*}
Chow and Slattery~\cite{MR4235264} proved a recursive formula for $S_{F}^{(1)}(N)$, and obtained the asymptotic estimate
\begin{equation}\label{equ:SF1asymp}
	S_F^{(1)}(N) \asymp N^{(\log 2) /\! \log \varphi}
\end{equation}
as $N \to +\infty$, where $\varphi := (1 + \sqrt{5}) / 2$ is the golden ratio. Moreover, they showed that the limit
\begin{equation*}
	\lim_{N \to +\infty} \frac{S_F^{(1)}(N)}{N^{(\log 2) /\! \log \varphi}}
\end{equation*}
does not exist.
Limits of this kind were then studied in a far more general context by Zhou~\cite{preprintZhou23}.
Chow and Jones~\cite{preprintCJ23} proved an inhomogeneous linear recurrence for a subsequence of $S_F^{(2)}(N)$ and showed that
\begin{equation}\label{equ:SF2asymp}
	S_F^{(2)}(N) \asymp N^{(\log \lambda_2) /\! \log \varphi}
\end{equation}
as $N \to +\infty$, where $\lambda_2 = 2.48\dots$ is the greatest root of the polynomial $X^3 - 2X^2 - 2X + 2$.

Our first result is the following.

\begin{theorem}\label{thm:main}
	Let $p$ be a positive integer.
	Then there exists a real number $\lambda_p > 1$ such that
	\begin{equation}\label{equ:main-asymp}
		S^{(p)}_F(N) \asymp_p N^{(\log \lambda_p) /\!\log \varphi}
	\end{equation}
	as $N \to +\infty$.
\end{theorem}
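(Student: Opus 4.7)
The plan is to encode $r_F(n)$ by a weighted automaton reading the Zeckendorf expansion of $n$, to obtain $(r_F(n))^p$ from the $p$-th tensor power of its transition matrices, and then to read off the order of $S_F^{(p)}(N)$ from the generalized spectral radius of the resulting family of nonnegative matrices, for which the Blondel--Nesterov theorem will furnish matching upper and lower bounds.

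The first step is to exhibit a pair of nonnegative integer matrices $A_0, A_1$ of some fixed size, together with vectors $u, v$, such that whenever $n$ has Zeckendorf representation $d_k d_{k-1} \cdots d_0$ (with $d_i \in \{0,1\}$ and no two consecutive $1$'s), one has $r_F(n) = u^{\top} A_{d_k} \cdots A_{d_0}\, v$. Such an automaton is implicit in the works of Berstel, Ardila, and Chow--Slattery: its states track the admissible modifications of a greedy partial representation via $F_{i+1}=F_i+F_{i-1}$, and its edge weights count local moves. Taking the $p$-th tensor power gives $(r_F(n))^p = (u^{\otimes p})^{\top} A_{d_k}^{\otimes p}\cdots A_{d_0}^{\otimes p}\,v^{\otimes p}$, and after coupling these tensor powers with a small ``last-digit memory'' automaton that kills any product containing the factor $11$, one obtains two nonnegative matrices $B_0, B_1$ of size $O_p(1)$ and vectors $u_p, v_p$ such that
\begin{equation*}
    \sum_{|n|_Z = k} (r_F(n))^p \;=\; u_p^{\top} \!\sum_{w \in \{0,1\}^k} B_{w_1} \cdots B_{w_k}\, v_p ,
\end{equation*}
where $|n|_Z$ denotes the Zeckendorf length. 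Since integers with $|n|_Z = k$ fill an interval of length $\asymp \varphi^k$, we have $N \asymp \varphi^{k(N)}$ for the corresponding truncation index, and $S_F^{(p)}(N)$ inherits the exponential growth rate in $k$ of the bilinear form above.

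That exponential rate is the generalized (equivalently, joint) spectral radius
\begin{equation*}
    \lambda_p \;:=\; \rho\bigl(\{B_0,B_1\}\bigr) \;=\; \lim_{k\to\infty}\,\max_{w\in\{0,1\}^k}\|B_{w_1}\cdots B_{w_k}\|^{1/k} ,
\end{equation*}
and this is where the theorem of Blondel--Nesterov for finite sets of \emph{nonnegative} matrices enters: it guarantees not only that the usual $\limsup$ defining $\rho$ is an honest limit, but also that the full sum $\sum_{w}\|B_{w_1}\cdots B_{w_k}\|$ is sandwiched between two constant multiples of $\lambda_p^k$. Translating through $N\asymp\varphi^k$ then yields $S_F^{(p)}(N)\asymp_p N^{(\log \lambda_p)/\log \varphi}$. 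The strict inequality $\lambda_p > 1$ is automatic from the trivial bound $S_F^{(p)}(N)\ge N$, which in fact forces $\lambda_p \ge \varphi$.

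The main obstacle will be the \emph{lower} bound of matching order: the upper bound follows routinely from submultiplicativity of matrix norms together with the combinatorial count of admissible Zeckendorf words, but controlling the sum from below by a constant times $\lambda_p^k$ is harder, since the number of admissible words is itself exponential and for general matrix families the individual products could cancel or decay along most paths. It is precisely the nonnegativity of $B_0, B_1$ --- inherited from the combinatorial origin of the construction --- that makes the Blondel--Nesterov estimate applicable and closes this gap.
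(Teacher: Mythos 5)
Your overall architecture (a linear representation of $r_F$ over the Zeckendorf alphabet, $p$-th Kronecker powers to encode $(r_F(n))^p$, and a squeeze through $N\asymp\varphi^k$) is essentially the paper's, which realizes it via $p$ parallel copies of Berstel's automaton. But the step where you identify the growth exponent is wrong. The quantity
\begin{equation*}
	u_p^{\top}\!\sum_{w\in\{0,1\}^k} B_{w_1}\cdots B_{w_k}\,v_p \;=\; u_p^{\top}(B_0+B_1)^k v_p
\end{equation*}
grows like $\rho(B_0+B_1)^k$, the spectral radius of the \emph{sum} of the two matrices, not like the joint/generalized spectral radius $\rho(\{B_0,B_1\})^k$ of the pair; for nonnegative matrices one only has $\rho(\{B_0,B_1\})\le\rho(B_0+B_1)$, and the inequality is typically strict. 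The Blondel--Nesterov theorem does not say what you attribute to it: it states that $\rho\big(M_1^{\otimes k}+\cdots+M_h^{\otimes k}\big)^{1/k}\to\rho(M_1,\dots,M_h)$ as the \emph{Kronecker-power index} $k\to\infty$; it provides no sandwich of $\sum_{w}\|B_{w_1}\cdots B_{w_k}\|$ between multiples of $\rho(\{B_0,B_1\})^k$ (already $B_0=B_1=(1)$ refutes that: the sum is $2^k$ while the joint spectral radius is $1$). Concretely, with $B_i=V_i^{\otimes p}$ built from Berstel's matrices one has $\rho(\{B_0,B_1\})=\rho(\{V_0,V_1\})^p=\varphi^{p/2}$, so your definition would give the exponent $p/2$; for $p=1$ this is $1/2$, contradicting the known $S_F^{(1)}(N)\asymp N^{(\log 2)/\log\varphi}$. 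The correct definition is $\lambda_p=\rho(B_0+B_1)$, i.e., the Perron--Frobenius eigenvalue of the transition matrix of the product automaton; Blondel--Nesterov enters the paper only in the proof of Theorem~\ref{thm:lambda-p}, where the Kronecker index plays the role of $p$, not of the word length.

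Second, even with the correct $\lambda_p$, the matching upper and lower constants in $u_p^{\top}(B_0+B_1)^k v_p\asymp\lambda_p^k$ are not automatic: one must rule out several eigenvalues of maximal modulus and polynomial factors. The paper does this by determining exactly the accessible states of the product automaton ($3\cdot 2^p-2$ of the $4^p$ states) and verifying that the restricted transition graph is strongly connected and aperiodic (the initial state carries a loop), so that Theorem~\ref{thm:perron-frobenius-automaton} yields $S^{(p)}_F(f_{\ell+1})\sim c_p\lambda_p^\ell$. Your proposal does not address this, and it is the only part of the argument requiring genuine structural work on the automaton. The final squeeze $f_\ell\le N<f_{\ell+1}$ and the observation that $S^{(p)}_F(N)\ge N$ forces $\lambda_p\ge\varphi>1$ are fine and match the paper.
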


While the proofs of~\eqref{equ:SF1asymp} and~\eqref{equ:SF2asymp} given by Chow, Jones, and Slattery rely on counting arguments and inequalities, the proof of Theorem~\ref{thm:main} is based on automata theory.
We remark that Berstel~\cite{MR1922290} and Shallit~\cite{MR4195833} had already employed automata theory to study $r_F(n)$.

The proof of Theorem~\ref{thm:main} provides an effective algorithm to compute $\lambda_p$.
More precisely, we have that $\lambda_p$ is the greatest real root of an effectively computable monic polynomial with integer coefficients.
In particular, it follows that $\lambda_p$ is an algebraic integer.
Table~\ref{tab:lambda-p} provides the first values of $\lambda_p$ and their minimal polynomials over $\mathbb{Q}$.

\begin{table}[ht]
	\begin{center}
		\begin{tabular}{c@{\hskip 10pt}c@{\hskip 10pt}l@{\hskip 10pt}}
			\toprule
			$p$ & $\lambda_p$ & \text{minimal polynomial of $\lambda_p$ over $\mathbb{Q}$} \\
			\midrule
			$1$ & $2.00000\dots$ & $X - 2$ \\
			$2$ & $2.48119\dots$ & $X^{3} - 2 X^{2} - 2 X + 2$ \\
			$3$ & $3.08613\dots$ & $X^{3} - 2 X^{2} - 4 X + 2$ \\
			$4$ & $3.84606\dots$ & $X^{5} - 2 X^{4} - 7 X^{3} - 2 X + 2$ \\
			$5$ & $4.80052\dots$ & $X^{5} - 2 X^{4} - 11 X^{3} - 8 X^{2} - 20 X + 10$ \\
			$6$ & $5.99942\dots$ & $X^{7} - 2 X^{6} - 17 X^{5} - 28 X^{4} - 88 X^{3} + 26 X^{2} - 4 X + 4$ \\
			$7$ & $7.50569\dots$ & $X^{7} - 2 X^{6} - 26 X^{5} - 74 X^{4} - 311 X^{3} + 34 X^{2} - 84 X + 42$ \\
			$8$ & $9.39867\dots$ & $X^{9} - 2 X^{8} - 40 X^{7} - 174 X^{6} - 969 X^{5} - 2 X^{4} - 428 X^{3} + 174 X^{2} - 4 X + 4$ \\
			\bottomrule
		\end{tabular}
		\vspace{5pt}
	\end{center}
	\caption{First values of $\lambda_p$ and their minimal polynomials over $\mathbb{Q}$.}\label{tab:lambda-p}
\end{table}

Our second result regards the growth of $\lambda_p$.

\begin{theorem}\label{thm:lambda-p}
	We have
	\begin{equation*}
		\lim_{p \to +\infty} \lambda_p^{1/p} = \varphi^{1/2} .
	\end{equation*}
\end{theorem}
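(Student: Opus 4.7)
The approach is to leverage the automaton/matrix characterization of $\lambda_p$ that underlies Theorem~\ref{thm:main}, together with the Blondel--Nesterov theorem on the generalized spectral radius.

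From (the proof of) Theorem~\ref{thm:main} one can extract nonnegative integer transition matrices $A_0, A_1$, coming from an automaton reading Zeckendorf expansions, together with vectors $u, v$ such that $r_F(n) = v^{\top} A_w u$ when $w$ is the Zeckendorf word of $n$. Via the Kronecker-power identity $(v^{\top} A u)^p = (v^{\otimes p})^{\top} A^{\otimes p} (u^{\otimes p})$, one rewrites
\[
    S_F^{(p)}(F_{k+1}) \;=\; \sum_w \bigl(v^{\top} A_w u\bigr)^p
\]
(the sum being over valid length-$k$ Zeckendorf words $w$) as a bilinear form in $B_p^k$, where $B_p$ is a nonnegative matrix assembling $A_0^{\otimes p}$ and $A_1^{\otimes p}$ in accordance with the Zeckendorf constraint. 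Hence $\lambda_p = \rho(B_p)$ coincides with the generalized $p$-th spectral radius $\rho_p(\mathcal{A})$ of the constrained family $\mathcal{A}$, and the Blondel--Nesterov theorem then yields
\[
    \lim_{p \to \infty} \lambda_p^{1/p} \;=\; \lim_{p \to \infty} \rho_p(\mathcal{A})^{1/p} \;=\; \rho(\mathcal{A}),
\]
the joint spectral radius of $\mathcal{A}$.

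It remains to show $\rho(\mathcal{A}) = \varphi^{1/2}$. By the Berger--Wang formula and the nonnegativity of the matrices, $\rho(\mathcal{A})$ equals the exponential growth rate of $\max_{n < F_{k+1}} r_F(n)$. For the lower bound $\rho(\mathcal{A}) \geq \varphi^{1/2}$, I would exhibit a sequence of integers $n_k < F_{k+1}$ whose Zeckendorf words are periodic and along which $r_F(n_k) \geq c\,\varphi^{k/2}$ for some $c > 0$; the natural candidate is a word of short period for which the corresponding matrix product has spectral radius expressible as a rational power of $\varphi$. For the matching upper bound one checks that every short product $A_w$ satisfies $\rho(A_w)^{1/|w|} \leq \varphi^{1/2}$; owing to the small dimension of the $A_i$ and the effective enumerability (up to cyclic shift) of Zeckendorf-valid periodic words, this reduces to a finite algebraic verification. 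Isolating this explicit value of the joint spectral radius is the principal obstacle of the argument, but the rigidity of the Fibonacci setting and the smallness of the transition matrices should make the calculation tractable; once it is accomplished, the theorem follows by combining it with the application of Blondel--Nesterov above.
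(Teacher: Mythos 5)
Your high-level strategy matches the paper's: identify $\lim_{p\to\infty}\lambda_p^{1/p}$ with the joint/generalized spectral radius of the pair of transition matrices via Blondel--Nesterov, then show that radius equals $\varphi^{1/2}$ (lower bound from a periodic word, upper bound from a bound on all products). However, two of the key steps have genuine gaps. First, the identification ``$\lambda_p=\rho(B_p)$ coincides with the generalized $p$-th spectral radius $\rho_p(\mathcal{A})$'' is false as stated: $\rho\big(A_0^{\otimes p}+A_1^{\otimes p}\big)$ and $\max_{|w|=p}\rho(A_w)$ are different quantities, and Blondel--Nesterov asserts only that their $p$-th roots share a limit. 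Moreover, Blondel--Nesterov applies to the plain sum of Kronecker powers, not to a matrix ``assembling $A_0^{\otimes p}$ and $A_1^{\otimes p}$ in accordance with the Zeckendorf constraint.'' The paper avoids this by using Berstel's automaton, whose $p$-fold product $\mathcal{B}_p$ has transition matrix exactly $V_0^{\otimes p}+V_1^{\otimes p}$, and then proves separately (via a block-triangular decomposition of $U_p$) that the inaccessible states contribute only eigenvalues in $\{-1,0,1\}$, so that $\rho\big(V_0^{\otimes p}+V_1^{\otimes p}\big)=\lambda_p$ because $\lambda_p>1$. Your proposal needs an analogue of that reconciliation step before the theorem of Blondel and Nesterov can be invoked.

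Second, and more seriously, the upper bound $\rho(\mathcal{A})\le\varphi^{1/2}$ is not actually proved. Verifying $\rho(A_w)^{1/|w|}\le\varphi^{1/2}$ for short (or short periodic) words is not sufficient: the generalized spectral radius is a $\limsup$ over all lengths, and no finite list of product lengths certifies an upper bound --- this is precisely why the finiteness conjecture fails in general. The paper's Lemma~\ref{lem:rho-T0-T1} does the real work here: using cyclic shifts and the relation $V_1^2=\mathbf{0}$ it reduces to words $(0^{k_1-1}1)\cdots(0^{k_s-1}1)$, passes to the $2\times 2$ matrices $Z_h$, and proves $\|Z_h\|^{1/h}\le\varphi^{1/2}$ in the spectral norm for \emph{every} $h\ge 2$, so that submultiplicativity of the norm yields the bound for products of all lengths at once. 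Your lower bound (a periodic word whose matrix product has spectral radius a power of $\varphi$; the paper uses $(0001)^h$, giving $\rho(V_{0001})^{1/4}=\varphi^{1/2}$) is fine. As written, the proposal is a correct outline of the paper's route with its two hardest steps left unestablished.
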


We remark that the methods used in the proofs of Theorems~\ref{thm:main} and~\ref{thm:lambda-p} can be adapted to prove similar results, where the sequence of Fibonacci numbers is replaced by other linear recurrences, such as the sequence of Lucas numbers.

\subsection*{Notation}

We employ the standard asymptotic notation.
More precisely, given two functions $f, g \colon \mathbb{N} \to \mathbb{R}$, we write $f(n) \sim g(n)$ and $f(n) \asymp g(n)$ to mean
\begin{equation}\label{equ:liminfsup}
	\lim_{n \to +\infty} \frac{f(n)}{g(n)} = 1 \quad\text{and}\quad
	0 < \liminf_{n \to +\infty} \frac{f(n)}{g(n)} \leq \limsup_{n \to +\infty} \frac{f(n)}{g(n)} < +\infty ,
\end{equation}
respectively.
We add parameters as subscripts of the symbol ``$\asymp$'' if the values of the limit inferior and limit superior in~\eqref{equ:liminfsup} may depend on such parameters.
Other notation are introduced when first used.

\section{Preliminaries}

In this section, we collect some preliminary results needed later.
Sections~\ref{sec:zeck} and~\ref{sec:berstel} follow most of the notation of Shallit~\cite[Section~3]{MR4195833}.

\subsection{Fibonacci representation}\label{sec:zeck}

Let $(f_n)$ be the sequence of Fibonacci numbers, recursively defined by $f_1 := 1$, $f_2 := 2$, and $f_{n + 2} = f_{n + 1} + f_n$ for every positive integer $n$.
Note that, differently from the usual definition, we shifted the indices so that the sequence of Fibonacci numbers is strictly increasing, which is more convenient for our purposes.
For every binary word $x = x_1 \cdots x_\ell$ of $\ell$ bits $x_1, \dots, x_\ell \in \{0,1\}$, let $[x]_F := \sum_{i=1}^\ell x_i f_{\ell - i + 1}$.
Moreover, let $C_F$ be the set of binary words having no consecutive $1$'s and that do not start with $0$.
It is well known that every nonnegative integer can be written in a unique way as a sum of distinct nonconsecutive Fibonacci numbers~\cite{MR0308032}.
Hence, for every integer $n \geq 0$ there exists a unique $y \in C_F$ such that $n = [y]_F$.
The word $y$ is called the \emph{canonical Fibonacci representation} of $n$.

\subsection{Berstel's automaton}\label{sec:berstel}

For binary words $x=x_1 \cdots x_\ell$ and $y = y_1 \cdots y_\ell$ of the same length, define $x \times y$ to be the word of pairs $(x_1, y_1) \cdots (x_\ell, y_\ell)$.
Berstel~\cite{MR1922290} constructed a deterministic finite automaton $\mathcal{B}$ that takes as input $x \times y$ and accepts if and only if
\begin{equation*}
	[x]_F = [y]_F \quad\text{and}\quad y \in 0^* C_F .
\end{equation*}
Berstel's automaton has $4$ states, which we call $a,b,c,d$, and is depicted in Figure~\ref{fig:berstel-automaton}.

\begin{figure}[ht]
	\centering
	\includegraphics{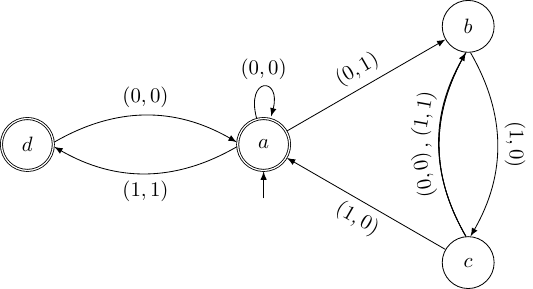}
	\caption{Berstel's automaton $\mathcal{B}$.}
	\label{fig:berstel-automaton}
\end{figure}

\subsection{Irreducible aperiodic automata}

We need to recall some terminology related to the Perron--Frobenius theorem and its application to automata theory~(cf.~\cite[~Section~V.5.2]{MR2483235}).

Let $M$ be an $n \times n$ matrix with nonnegative real entries.
The \emph{dependency graph} of $M$ is the (directed) graph having vertex set $\{1, \dots, n\}$ and edge set $\{i \to j : M_{i,j} \neq 0\}$.
The matrix $M$ is \emph{irreducible} if its dependency graph is strongly connected (that is, any two vertices are connected by a directed path).
A strongly connected graph is \emph{aperiodic} if its vertex set $V$ cannot be partitioned into disjoint sets $V_0, \dots, V_{m - 1}$, with $m > 1$, such that each edge with source in $V_i$ has destination in $V_{i + 1 \bmod m}$.
The matrix $M$ is \emph{aperiodic} if its dependency graph is aperiodic.

As a consequence of the Perron--Frobenius theorem~\cite[V.34]{MR2483235}, if the matrix $M$ is irreducible and aperiodic, then the spectrum of $M$ contains a unique eigenvalue $\lambda$ of maximum absolute value, which is called the \emph{Perron--Frobenius eigenvalue} of $M$.
Moreover, the eigenvalue $\lambda$ is real and simple.
In particular, we have that $\lambda$ is equal to the spectral radius of $M$.

\begin{theorem}\label{thm:perron-frobenius-automaton}
	Let $\mathcal{A}$ be a deterministic finite automaton whose graph is strongly connected and aperiodic.
	Then the number $A_\ell$ of words of length $\ell$ that are accepted by $\mathcal{A}$ satisfies
	\begin{equation*}
		A_\ell \sim c \lambda^\ell
	\end{equation*}
	as $\ell \to +\infty$, where $c > 0$ is a real number and $\lambda$ is the Perron--Frobenius eigenvalue of the transition matrix of $\mathcal{A}$.
\end{theorem}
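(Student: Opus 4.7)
The plan is to encode the counting problem as a matrix power computation and then apply Perron--Frobenius directly. Let $M$ be the transition matrix of $\mathcal{A}$, that is, the $n \times n$ nonnegative integer matrix whose entry $M_{i,j}$ is the number of letters of the input alphabet that take state $i$ to state $j$. By the definitions in the paper, the dependency graph of $M$ coincides with the underlying directed graph of $\mathcal{A}$, so the hypothesis that $\mathcal{A}$ is strongly connected and aperiodic means exactly that $M$ is irreducible and aperiodic. Let $u$ be the column vector indicating the initial state and $v$ the column vector indicating the set of accepting states (assumed nonempty, since otherwise $A_\ell = 0$ for every $\ell$). A straightforward induction on $\ell$ gives that $(M^\ell)_{i,j}$ equals the number of length-$\ell$ words that send state $i$ to state $j$, and therefore
\begin{equation*}
	A_\ell = u^{\!\top} M^\ell v .
\end{equation*}

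Next, I would invoke the Perron--Frobenius theorem as recalled in the paper: since $M$ is irreducible and aperiodic, its spectral radius $\lambda$ is a simple eigenvalue strictly larger in absolute value than every other eigenvalue of $M$, and the associated right and left eigenvectors $x$ and $y$ can be chosen with strictly positive entries, normalized so that $y^{\!\top} x = 1$. The spectral decomposition of $M$ then yields
\begin{equation*}
	M^\ell = \lambda^\ell\, x y^{\!\top} + R_\ell ,
\end{equation*}
where the remainder $R_\ell$ satisfies $\|R_\ell\| = O(\mu^\ell)$ for some $\mu \in [0, \lambda)$ equal to the second largest modulus of an eigenvalue of $M$ (this follows by putting $M$ into Jordan form and separating the one-dimensional block corresponding to $\lambda$ from the rest). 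Substituting into the expression for $A_\ell$ gives
\begin{equation*}
	A_\ell = c \lambda^\ell + O(\mu^\ell), \qquad \text{where} \qquad c := (u^{\!\top} x)(y^{\!\top} v) .
\end{equation*}

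Finally, I would verify that $c > 0$, which is the only delicate point. Since both $x$ and $y$ have strictly positive entries and $u, v$ are nonzero nonnegative vectors, both inner products $u^{\!\top} x$ and $y^{\!\top} v$ are strictly positive, so $c > 0$ and hence $A_\ell \sim c \lambda^\ell$. The main obstacle is thus not in the counting step, which is routine, but in quoting a version of the Perron--Frobenius theorem strong enough to give a quantitative error term and strictly positive eigenvectors; once irreducibility and aperiodicity of $M$ are identified with the stated graph-theoretic hypotheses, the rest reduces to a standard linear algebra argument that can be found in~\cite[Section~V.5.2]{MR2483235}.
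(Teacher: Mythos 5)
The paper does not give an argument of its own here: its entire proof is the citation \cite[Proposition~V.7]{MR2483235}. Your proposal supplies the standard spectral argument that underlies that reference --- the identity $A_\ell = u^{\top} M^\ell v$, the Perron--Frobenius decomposition $M^\ell = \lambda^\ell x y^{\top} + R_\ell$ for a primitive nonnegative matrix, and the positivity of $c = (u^{\top}x)(y^{\top}v)$ coming from the strict positivity of the Perron eigenvectors --- so it is correct and in substance the same route (Flajolet--Sedgewick phrase it via the rational generating function $u^{\top}(I-zM)^{-1}v$, but the mechanism is identical). One minor inaccuracy: if a subdominant eigenvalue of modulus $\mu$ carries a nontrivial Jordan block, the remainder is $O(\ell^{k}\mu^\ell)$ rather than $O(\mu^\ell)$; this is harmless, since any $o(\lambda^\ell)$ bound yields $A_\ell \sim c\lambda^\ell$, and a clean geometric bound is recovered by replacing $\mu$ with any $\mu' \in (\mu, \lambda)$.
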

\begin{proof}
	See~\cite[Proposition~V.7]{MR2483235}.
\end{proof}

\subsection{Generalized spectral radius}

Let $M$ be a square matrix over $\mathbb{C}$.
We let $\rho(M)$ denote the \emph{spectral radius} of $M$, that is, the maximum of the absolute values of the eigenvalues of $M$.
Let $\Sigma$ be a finite set of $n \times n$ matrices over $\mathbb{C}$.
Daubechies and Lagarias~\cite[p.~235]{MR1142737} defined the \emph{generalized spectral radius} of $\Sigma$ as
\begin{equation*}
	\rho(\Sigma) := \limsup_{k \to +\infty} \big(\rho_k(\Sigma)\big)^{1/k} ,
\end{equation*}
where
\begin{equation*}
	\rho_k(\Sigma) := \max\!\left\{ \,\rho\!\left(\prod_{i=1}^k M_i \right) : M_1, \dots, M_k \in \Sigma \right\} .
\end{equation*}
Note that, if $\Sigma$ contains a single matrix $M$, then $\rho(\Sigma) = \rho(M)$.
If $\Sigma = \{M_1, \dots, M_h\}$, we also write $\rho(M_1, \dots, M_h) := \rho(\Sigma)$.

For every matrix $M$ over $\mathbb{C}$ and for every positive integer $k$, let
\begin{equation*}
	M^{\otimes k} := \underbrace{M \otimes \cdots \otimes M}_{\text{$k$ times $M$}} ,
\end{equation*}
where $\otimes$ denotes the Kronecker product of matrices.

\begin{theorem}\label{thm:kronecker-power}
	Let $M_1, \dots, M_h$ be $n \times n$ matrices with real nonnegative entries.
	Then
	\begin{equation}\label{equ:limsup-kronecker}
		\lim_{k \to +\infty} \left(\rho\big(M_1^{\otimes k} + \cdots + M_h^{\otimes k}\big)\right)^{1/k} = \rho(M_1, \dots, M_h) .
	\end{equation}
\end{theorem}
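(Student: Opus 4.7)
The plan is to sandwich $\rho(S_k)^{1/k}$, where $S_k := M_1^{\otimes k}+\cdots+M_h^{\otimes k}$ and $\Sigma := \{M_1,\dots,M_h\}$, between two expressions that both converge to $\rho(\Sigma)$. The starting point is the mixed-product property $(A\otimes C)(B\otimes D) = (AB)\otimes (CD)$, iterated $k$ times, which yields $A^{\otimes k}B^{\otimes k} = (AB)^{\otimes k}$ and, by expanding the $m$-th power of the sum $S_k$, the key identity
\begin{equation*}
	S_k^{m} \;=\; \sum_{\mathbf{i}\in\{1,\dots,h\}^m} N_{\mathbf{i}}^{\otimes k},
\end{equation*}
where $N_{\mathbf{i}} := M_{i_1}\cdots M_{i_m}$ is nonnegative as a product of nonnegative matrices.

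For the lower bound I would exploit the monotonicity of the Perron--Frobenius eigenvalue on nonnegative matrices: since $N_{\mathbf{i}}^{\otimes k} \le S_k^m$ entrywise and $\rho(N_{\mathbf{i}}^{\otimes k}) = \rho(N_{\mathbf{i}})^k$, choosing $\mathbf{i}$ that maximises $\rho(N_{\mathbf{i}})$ gives
\begin{equation*}
	\rho(S_k)^m \;=\; \rho(S_k^{m}) \;\ge\; \rho_m(\Sigma)^k,
\end{equation*}
so $\rho(S_k)^{1/k} \ge \rho_m(\Sigma)^{1/m}$ for every $m$. Letting $k\to\infty$ and then $m\to\infty$ along a subsequence realising the $\limsup$ in the definition of $\rho(\Sigma)$ yields $\liminf_{k\to\infty}\rho(S_k)^{1/k}\ge \rho(\Sigma)$.

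For the upper bound I would fix a submultiplicative matrix norm $\|\cdot\|$ that satisfies $\|A\otimes B\| = \|A\|\,\|B\|$, for instance the Frobenius norm. Using the identity above and the triangle inequality,
\begin{equation*}
	\rho(S_k)^m \;=\; \rho(S_k^{m}) \;\le\; \|S_k^{m}\| \;\le\; \sum_{\mathbf{i}\in\{1,\dots,h\}^m}\|N_{\mathbf{i}}\|^k \;\le\; h^{m}\!\!\max_{\mathbf{i}\in\{1,\dots,h\}^m}\!\|N_{\mathbf{i}}\|^k ,
\end{equation*}
and taking $(km)$-th roots followed by $\limsup_{k\to\infty}$ gives $\limsup_k \rho(S_k)^{1/k} \le \widehat{\rho}_m(\Sigma)^{1/m}$ with $\widehat{\rho}_m(\Sigma) := \max_{\mathbf{i}\in\{1,\dots,h\}^m}\|N_{\mathbf{i}}\|$. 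Minimising over $m$ then yields $\limsup_k \rho(S_k)^{1/k} \le \widehat{\rho}(\Sigma)$, the joint spectral radius of $\Sigma$. The main obstacle is that the norm-based estimate naturally produces $\widehat{\rho}(\Sigma)$ rather than the generalised spectral radius $\rho(\Sigma)$ used in the statement; I would close this gap by invoking the Berger--Wang theorem (or the equivalence established by Daubechies--Lagarias), which asserts $\widehat{\rho}(\Sigma) = \rho(\Sigma)$ for any bounded set of matrices, thereby matching the lower bound and completing the proof.
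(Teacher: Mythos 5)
Your proposal is correct, and it supplies an actual argument where the paper gives none: the paper's ``proof'' of this theorem is purely a citation to Blondel--Nesterov for the joint-spectral-radius version of the identity, followed by a citation to Berger--Wang (or Elsner) to replace the joint spectral radius $\widehat{\rho}(\Sigma)$ by the generalized one $\rho(\Sigma)$. What you have written is essentially a reconstruction of the Blondel--Nesterov argument itself: the mixed-product identity $S_k^m=\sum_{\mathbf{i}}N_{\mathbf{i}}^{\otimes k}$, the lower bound via entrywise monotonicity of the spectral radius on nonnegative matrices together with $\rho(N_{\mathbf{i}}^{\otimes k})=\rho(N_{\mathbf{i}})^k$, and the upper bound via a submultiplicative norm that is multiplicative on Kronecker products (the Frobenius norm works, as you say). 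All the individual steps check out, including the passage $\rho(S_k)^{1/k}\le h^{1/k}\,\widehat{\rho}_m(\Sigma)^{1/m}$ and the use of Fekete/submultiplicativity to identify $\inf_m\widehat{\rho}_m(\Sigma)^{1/m}$ with $\widehat{\rho}(\Sigma)$. Note that you have not eliminated the one genuinely deep input: both your argument and the paper's ultimately rest on the Berger--Wang theorem $\widehat{\rho}(\Sigma)=\rho(\Sigma)$, which is exactly where the nonnegativity-free part of the difficulty lives (and is why the identity can fail for general real matrices, as the paper's remark notes). So your route buys a self-contained derivation of the sandwich, at the cost of still importing Berger--Wang as a black box --- the same black box the paper imports.
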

\begin{proof}
	This is a result of Blondel and Nesterov~\cite{MR2176820}.
	Note that they stated~\eqref{equ:limsup-kronecker} for the \emph{joint spectral radius} instead of the generalized spectral radius.
	However, for finite sets of matrices the joint spectral radius and the generalized spectral radius are equal (see~\cite[Theorem~IV]{MR1152485} or \cite[Theorem~1$^\prime$]{MR1334574}).
\end{proof}

\begin{remark}
	There exist matrices over $\mathbb{R}$ for which~\eqref{equ:limsup-kronecker} is false (see~\cite[last paragraph of p.~1532]{MR2889254}).
	However, Xiao and Xu~\cite{MR2889254} proved that if the limit is replaced by a limit superior then~\eqref{equ:limsup-kronecker} holds for all matrices over $\mathbb{C}$.
\end{remark}

\section{Proof of Theorem~\ref{thm:main}}

Let $p$ be a positive integer.
In order to prove Theorem~\ref{thm:main}, we proceed as follows.

\noindent
For binary words
\begin{equation*}
	x^{(1)} = x_1^{(1)}\cdots x_\ell^{(1)},\;
	\dots ,\;
	x^{(p)} = x_1^{(p)}\cdots x_\ell^{(p)},\; \text{ and }\;
	y = y_1\cdots y_\ell,
\end{equation*}
of the same length $\ell$, define $x^{(1)} \times \cdots \times x^{(p)} \times y$ to be the word of $(p+1)$-tuples
\begin{equation*}
	(x^{(1)}_1, \dots, x^{(p)}_1, y_1) \cdots (x^{(1)}_\ell, \dots, x^{(p)}_\ell, y_\ell) .
\end{equation*}
First, we construct a deterministic finite automaton $\mathcal{A}_p$ that takes as input $x^{(1)} \times \cdots \times x^{(p)} \times y$ and accepts if and only if
\begin{equation}\label{equ:accept-p}
	\big[x^{(1)}\big]_F = \cdots = \big[x^{(p)}\big]_F = [y]_F \quad \text{and} \quad y \in 0^* C_F .
\end{equation}
Hence, by the uniqueness of the canonical Fibonacci representation, it follows easily that the number of words of length $\ell$ that are accepted by $\mathcal{A}_p$ is equal to $S^{(p)}_F(f_{\ell+1})$.

Second, we prove that the graph of $\mathcal{A}_p$ is strongly connected and aperiodic.
Consequently, by Theorem~\ref{thm:perron-frobenius-automaton}, we get that
\begin{equation}\label{equ:SpFfell1}
	S^{(p)}_F(f_{\ell+1}) \sim c_p \lambda_p^\ell ,
\end{equation}
as $\ell \to +\infty$, where $c_p > 0$ is a real number and $\lambda_p$ is the Perron--Frobenius eigenvalue of the transition matrix of $\mathcal{A}_p$.

Finally, for every positive integer $N$, let $\ell$ be the unique positive integer such that
\begin{equation*}
	f_\ell \leq N < f_{\ell + 1} .
\end{equation*}
Hence, we have
\begin{equation}\label{equ:squeeze}
	S^{(p)}_F(f_{\ell}) \leq S^{(p)}_F(N) < S^{(p)}_F(f_{\ell+1}) .
\end{equation}
Moreover, the Binet formula yields $N \asymp \varphi^\ell$.
Consequently, from~\eqref{equ:SpFfell1} and~\eqref{equ:squeeze} we get that
\begin{equation*}
	S^{(p)}_F(N) \asymp_p \lambda_p^\ell = \big(\varphi^\ell\big)^{(\log \lambda_p) / \! \log \varphi} \asymp_p N^{(\log \lambda_p) / \! \log \varphi} ,
\end{equation*}
as desired.
Note that $\lambda_p > 1$; otherwise~\eqref{equ:main-asymp} could not be satisfied.

It remains to construct $\mathcal{A}_p$ and to prove that its graph is strongly connected and aperiodic.
The construction of $\mathcal{A}_p$ proceeds as follows.
We begin by constructing a deterministic finite automaton $\mathcal{B}_p$ that consists of $p$ copies of the Berstel's automaton $\mathcal{B}$ that run in parallel on inputs $(x^{(1)}, y)$, \dots, $(x^{(p)}, y)$.
More precisely, we define $\mathcal{B}_p$ as follows.
\vspace{0.5em}
\begin{itemize}
	\setlength\itemsep{0.5em}
	
	\item The states of $\mathcal{B}_p$ are the elements of $\{a,b,c,d\}^p$.
	We write the states of $\mathcal{B}_p$ as $p$-tuples or as words of $p$ letters.
	
	\item The initial state of $\mathcal{B}_p$ is $a^p$.
	
	\item The accepting states of $\mathcal{B}_p$ are $a^p$ and $d^p$.
	
	\item There is a transition labeled $(x_1, \dots, x_p, y)$ from the state $(s_1, \dots, s_p)$ to the state $(s_1^\prime, \dots, s_p^\prime)$ of $\mathcal{B}_p$ if and only if for each $i \in \{1, \dots, p\}$ there is a transition labeled $(x_i, y)$ from the state $s_i$ to the state $s_i^\prime$ of $\mathcal{B}$.
\end{itemize}
By construction, and by the properties of Berstel's automaton $\mathcal{B}$, it follows easily that $\mathcal{B}_p$ accepts the input $x^{(1)} \times \cdots \times x^{(p)} \times y$ if and only if~\eqref{equ:accept-p} holds.

Note that $\mathcal{B}_p$ has $4^p$ states.
We shall prove that (for large $p$) most of these states are not accessible.
More precisely, we shall prove that the set of accessible states of $\mathcal{B}_p$ is equal to
\begin{equation*}
	\mathcal{S}_p := \{a, b\}^p \cup \{a, c\}^p \cup \{b, d\}^p .
\end{equation*}
For all $s_1, s_2 \in \{a, b, c, d\}$, put $\{s_1, s_2\}^{\bullet p} := \{s_1, s_2\}^p \setminus \{s_1^p, s_2^p\}$.
The following claims on the states and the transitions of $\mathcal{B}_p$ can be easily checked.
\vspace{0.5em}
\begin{enumerate}[label={\footnotesize(t\arabic*)},labelindent=1em,itemsep=0.5em,topsep=0.5em]
	\setlength\itemsep{0.5em}
	
	\item\label{ite:t1} The transitions starting from $a^p$ are of the following two kinds.
	\vspace{0.5em}
	\begin{enumerate}[label={\footnotesize(t1\roman*)},labelindent=1em,itemsep=0.5em,topsep=0.5em]
		\setlength\itemsep{0.5em}
		
		\item A unique transition labeled $(0, \dots, 0)$ and going to $a^p$ itself.
		
		\item Transitions labeled $(x_1, \dots, x_p, 1)$ and sending $a^p$ to the state $s = (s_1, \dots, s_p) \in \{b, d\}^p$, where each $x_i \in \{0, 1\}$ is arbitrary, $s_i = b$ if $x_i = 0$, and $s_i = d$ if $x_i = 1$.
	\end{enumerate}
	
	\item\label{ite:t2} There is only one transition starting from $b^p$.
	This transition goes to $c^p$.
	
	\item\label{ite:t3} The transitions starting from $c^p$ are of the following three kinds.
	\vspace{0.5em}
	\begin{enumerate}[label={\footnotesize(t3\roman*)},labelindent=1em,itemsep=0.5em,topsep=0.5em]
		\setlength\itemsep{0.5em}
		
		\item Transitions labeled $(x_1, \dots, x_p, 0)$ and sending $c^p$ to the state $(s_1, \dots, s_p) \in \{a, b\}^{\bullet p}$, where
		$x_1, \dots, x_p \in \{0,1\}$ are arbitrary but not all equal, $s_i = b$ if $x_i = 0$, and $s_i = a$ if $x_i = 1$.
		
		\item A unique transition with labels $(0, \dots, 0)$ and $(1, \dots, 1)$ that goes to $b^p$.
		
		\item A unique transition labeled $(1, \dots, 1, 0)$ that goes to $a^p$.
		
	\end{enumerate}
	
	\item\label{ite:t4} There is only one transition starting from $d^p$.
	This transition goes to $a^p$.
	
	\item\label{ite:t5} For each state $s = (s_1, \dots, s_p) \in \{a, b\}^{\bullet p}$ there is a unique transition starting from $s$.
	This transition is labeled $(x_1, \dots, x_p, 0)$ and sends $s$ to the state $(s_1^\prime, \dots, s_p^\prime) \in \{a, c\}^{\bullet p}$, where $x_i = 0$ and $s_i^\prime = a$ if $s_i = a$, while $x_i = 1$ and  $s_i^\prime = c$ if $s_i = b$.
	
	\item\label{ite:t6} For each state $s = (s_1, \dots, s_p) \in \{a, c\}^{\bullet p}$ the transitions starting from $s$ are of the following two kinds.
	\vspace{0.5em}
	\begin{enumerate}[label={\footnotesize(t6\roman*)},labelindent=1em,itemsep=0.5em,topsep=0.5em]
		\setlength\itemsep{0.5em}
		\item Transitions labeled $(x_1, \dots, x_p, 0)$ and sending $s$ to the state $(s_1^\prime, \dots, s_p^\prime) \in \{a, b\}^p$, where if $s_i = a$ then $x_i = 0$ and $s_i^\prime = a$; while if $s_i = c$ then $x_i = 0$ and $s_i^\prime = b$, or $x_i = 1$ and $s_i^\prime = a$
		
		\item Transitions labeled $(x_1, \dots, x_p, 1)$ and sending $s$ to the state $(s_1^\prime, \dots, s_p^\prime) \in \{b, d\}^p$, where if $s_i = a$ then $x_i = 0$ and $s_i^\prime = b$, or $x_i = 1$ and $s_i^\prime = d$; while if $s_i = c$ then $x_i = 1$ and $s_i^\prime = b$.
		
	\end{enumerate}
	
	\item\label{ite:t7} For each state $s = (s_1, \dots, s_p) \in \{b, d\}^{\bullet p}$ there is a unique transition starting from $s$.
	This transition is labeled $(x_1, \dots, x_p, 0)$ and sends $s$ to the state $(s_1^\prime, \dots, s_p^\prime) \in \{a, c\}^{\bullet p}$, where $x_i = 0$ and $s_i^\prime = a$ if $s_i = d$, while $x_i = 1$ and $s_i^\prime = c$ if $s_i = b$.
	
\end{enumerate}

The transitions described in~\ref{ite:t1}--\ref{ite:t7} are depicted in Figure~\ref{fig:transitions-schematic}.

\begin{figure}[ht]
	\centering
	\vspace{0.5em}
	\includegraphics{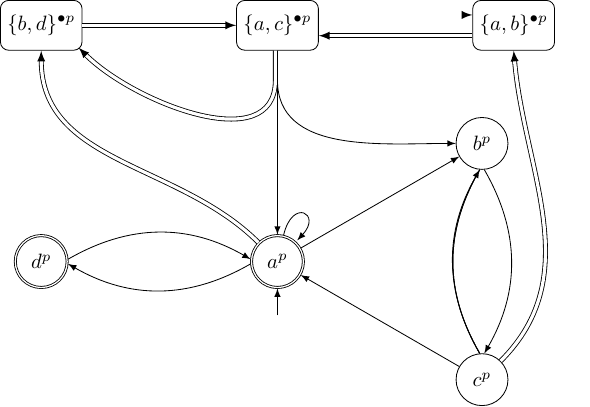}
	\caption{Transitions from states in $\mathcal{S}_p$.}
	\label{fig:transitions-schematic}
\end{figure}

From~\ref{ite:t1}--\ref{ite:t7}, we get that each transition starting from a state in $\mathcal{S}_p$ go to a state in $\mathcal{S}_p$.
Since $\mathcal{S}_p$ contains the initial state of $\mathcal{B}_p$, it follows that the set of accessible states of $\mathcal{B}_p$ is a subset of $\mathcal{S}_p$.
Furthermore, from~\ref{ite:t1}--\ref{ite:t7} it follows easily that the graph of $\mathcal{B}_p$ restricted to the states in $\mathcal{S}_p$ is strongly connected.
Hence, the set of accessible states of $\mathcal{B}_p$ is equal to $\mathcal{S}_p$, as previously claimed.

At this point, we define $\mathcal{A}_p$ as the deterministic finite automaton obtained by removing from $\mathcal{B}_p$ the states that are not accessible.
For example, the automaton $\mathcal{A}_2$ is depicted in Figure~\ref{fig:berstel2-automaton}.
We already observed that the graph of $\mathcal{A}_p$ is strongly connected.
Moreover, it follows easily that the graph of $\mathcal{A}_p$ is aperiodic, since the initial state has a loop.

The proof is complete.

\begin{remark}
	The automaton $\mathcal{A}_p$ has $3 \cdot 2^p - 2$ states but (for large $p$) many of these states are indistinguishable.
	By merging the indistinguishable states of $\mathcal{A}_p$, it is possible to obtain a minimized automaton with only $2^{p+1}$ states.
\end{remark}

\section{Proof of Theorem~\ref{thm:lambda-p}}

We let $\mathcal{A}_p$, $\mathcal{B}_p$, $\mathcal{S}_p$, and $\lambda_p$ be defined as in the proof of Theorem~\ref{thm:main}.
In particular, recall that $\mathcal{S}_p$ is the set of accessible states of $\mathcal{B}_p$, and that $\lambda_p$ is the spectral radius of the transition matrix of $\mathcal{A}_p$.
Let $\mathcal{S}_p^\prime$, respectively $\mathcal{S}_p^{\prime\prime}$, be the set of nonaccessible states $s = (s_1, \dots, s_p)$ of $\mathcal{B}_p$ such that $d$ does not belong, respectively belongs, to $\{s_1, \dots, s_p\}$.
Note that $\mathcal{S}_p, \mathcal{S}_p^\prime, \mathcal{S}_p^{\prime\prime}$ is a partition of $\{a, b, c, d\}^p$.

The following claims on the transitions of $\mathcal{B}_p$ can be easily checked.
\vspace{0.5em}
\begin{enumerate}[label={\footnotesize(t\arabic*)},labelindent=1em,itemsep=0.5em,topsep=0.5em]
	\setlength\itemsep{0.5em}
	\setcounter{enumi}{7}
	
	\item\label{ite:t8} There is no transition from a state in $\mathcal{S}_p$ to a state in $\mathcal{S}_p^\prime \cup \mathcal{S}_p^{\prime\prime}$.
	
	\item\label{ite:t9} For each state $s \in \mathcal{S}_p^\prime$ the transitions starting from $s$ are of the following two kinds.
	\vspace{0.5em}
	\begin{enumerate}[label={\footnotesize(t9\roman*)},labelindent=1em,itemsep=0.5em,topsep=0.5em]
		\setlength\itemsep{0.5em}
		
		\item Transitions sending $s$ to a state $s^\prime \notin \mathcal{S}_p^{\prime\prime}$ (not necessarily belonging to $\mathcal{S}_p^\prime$) such that $s^\prime$ contains more $a$'s than $s$.
		
		\item\label{ite:t9ii} Transitions sending $s = (s_1, \dots, s_p)$ to a state $s^\prime = (s_1^\prime, \dots, s_p^\prime) \in \mathcal{S}_p^\prime$ such that $s^\prime_i$ is equal to $a$, $b$, or $c$ if $s_i$ is equal to $a$, $c$, or $b$, respectively.
	\end{enumerate}
	
	\item\label{ite:t10} Every transition starting from $\mathcal{S}_p^{\prime\prime}$ does not have destination in $\mathcal{S}_p^{\prime\prime}$.
	
\end{enumerate}

\noindent
We sort the states of $\mathcal{B}_p$ according to the following rules.
\vspace{0.5em}
\begin{enumerate}[label={\footnotesize(o\arabic*)},labelindent=1em,itemsep=0.5em,topsep=0.5em]
	\setlength\itemsep{0.5em}
	
	\item\label{ite:ord1} The states in $\mathcal{S}_p$ come first (in whatever order).
	
	\item\label{ite:ord2} Then the states in $\mathcal{S}_p^\prime$ follow, sorted so that:
	\vspace{0.5em}
	\begin{enumerate}[label={\footnotesize(o2\roman*)},labelindent=1em,itemsep=0.5em,topsep=0.5em]
		\setlength\itemsep{0.5em}
		
		\item the number of $a$'s in each state is less than or equal to the number of $a$'s in the previous state;
		
		\item states connected by a transition in~\ref{ite:t9ii} are consecutive.
	\end{enumerate}
	
	\item\label{ite:ord3} The states in $\mathcal{S}_p^{\prime\prime}$ come last.
\end{enumerate}

Let $T_p$ and $U_p$ be the transition matrices of $\mathcal{A}_p$ and $\mathcal{B}_p$, respectively, according to the ordering of states defined by~\ref{ite:ord1}--\ref{ite:ord3}.
In light of~\ref{ite:t8}--\ref{ite:t10}, and since $\mathcal{A}_p$ is obtained from $\mathcal{B}_p$ by removing the states that are not in $\mathcal{S}_p$, we get that
\begin{equation*}
	U_p = \begin{pmatrix}\begin{array}{c|c|c}
			T_p & \mathbf{0} & \mathbf{0} \\[2pt] \hline
			\rule{0pt}{\normalbaselineskip} * & T_p^\prime & \mathbf{0} \\[2pt] \hline
			\rule{0pt}{\normalbaselineskip} * & * & \mathbf{0}
	\end{array}\end{pmatrix}, \quad\text{ with }\quad
	T_p^\prime := \begin{pmatrix}\begin{array}{c|c|c|c|c}
			\begin{matrix} 0 & 1 \\ 1 & 0 \end{matrix} & \mathbf{0} & \mathbf{0} & \cdots & \mathbf{0} \\ \hline
			\rule{0pt}{1.7em} * & \begin{matrix} 0 & 1 \\ 1 & 0 \end{matrix} & \mathbf{0} & \,\cdots & \mathbf{0} \\ \hline
			\rule{0pt}{1.7em} * & * & \begin{matrix} 0 & 1 \\ 1 & 0 \end{matrix} & \,\cdots & \mathbf{0} \\ \hline
			\rule{0pt}{2em} \vdots & \vdots & \vdots & \,\ddots & \vdots \\[5pt] \hline
			\rule{0pt}{1.7em} * & * & * & \,\cdots & \begin{matrix} 0 & 1 \\ 1 & 0 \end{matrix}
	\end{array}\end{pmatrix} ,
\end{equation*}
where the $\mathbf{0}$'s are zero matrices, and the $\mathbf{*}$'s denote matrices of the right sizes.

Therefore, the spectrum of $U_p$ is equal to the union of $\{-1, 0, +1\}$ and the spectrum of $T_p$.
Recalling that $\lambda_p > 1$, we get that $\rho(U_p) = \rho(T_p) = \lambda_p$.

\begin{figure}[ht]
	\centering
	\includegraphics{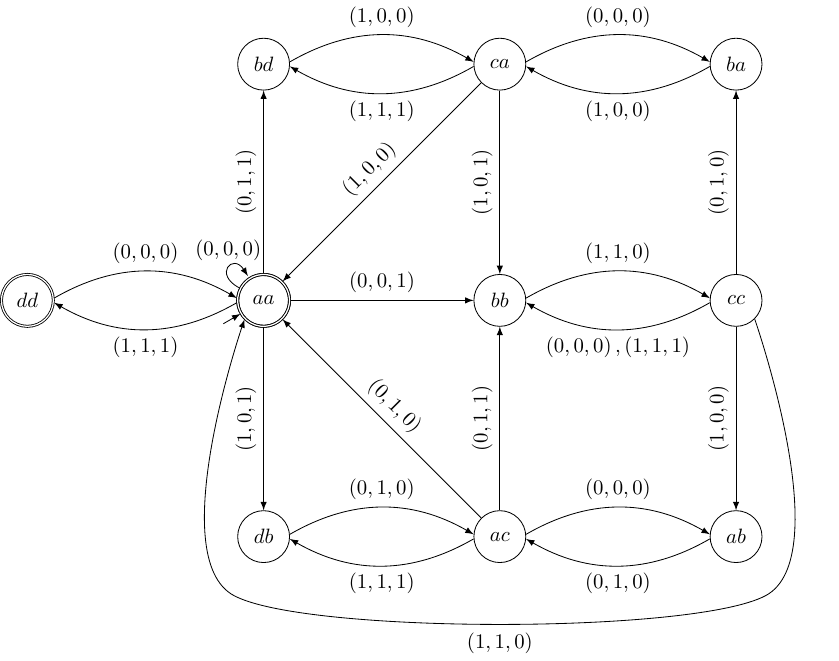}
	\caption{The automaton $\mathcal{A}_2$.}
	\label{fig:berstel2-automaton}
\end{figure}

For each $y \in \{0, 1\}$, let $V_y$ be the $4 \times 4$ matrix whose entry of the $i$th row and $j$th column is equal to the number of transitions labeled $(*,y)$ (where $*$ is any symbol) that go from the $i$th state to the $j$th state of the Berstel automaton $\mathcal{B}$.
Explicitly, we have that
\begin{equation*}
	V_0 = \begin{pmatrix}
		1 & 0 & 0 & 0 \\
		0 & 0 & 1 & 0 \\
		1 & 1 & 0 & 0 \\
		1 & 0 & 0 & 0 \\
	\end{pmatrix} \quad \text{and} \quad
	V_1 = \begin{pmatrix}
		0 & 1 & 0 & 1 \\
		0 & 0 & 0 & 0 \\
		0 & 1 & 0 & 0 \\
		0 & 0 & 0 & 0 \\
	\end{pmatrix} .
\end{equation*}

The next lemma provides the generalized spectral radius of $V_0$ and $V_1$.

\begin{lemma}\label{lem:rho-T0-T1}
	We have that $\rho(V_0, V_1) = \varphi^{1/2}$.
\end{lemma}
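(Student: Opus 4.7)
The plan is to reduce the computation of $\rho(V_0, V_1)$ to the spectral analysis of products of a one-parameter family of $2 \times 2$ matrices, exploiting the nilpotency $V_1^2 = 0$ together with an explicit rank factorization of $V_1$. Since the only nonzero columns of $V_1$ are those indexed by $b$ and $d$, I would write $V_1 = EF$ with
\begin{equation*}
	E = \begin{pmatrix} 1 & 1 \\ 0 & 0 \\ 1 & 0 \\ 0 & 0 \end{pmatrix}, \qquad F = \begin{pmatrix} 0 & 1 & 0 & 0 \\ 0 & 0 & 0 & 1 \end{pmatrix} ,
\end{equation*}
so that the relation $V_1^2 = 0$ becomes the identity $FE = 0$, and then define $G(c) := F V_0^c E$ for each $c \geq 1$.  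A direct walk count in Berstel's automaton, restricted to $y = 0$ labels and starting from states $b$ and $d$, yields the explicit formula
\begin{equation*}
	G(c) = \begin{pmatrix} \lceil c/2 \rceil & \lfloor c/2 \rfloor \\ 1 & 1 \end{pmatrix} .
\end{equation*}

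Next I would reduce the spectral radius of a general product $V_w$, for $w \in \{0,1\}^L$, to that of a product of $G$'s.  Any nonzero $V_{w_1}\cdots V_{w_L}$ forces $w$ to avoid the substring $11$, and since $\operatorname{spec}(AB)\setminus\{0\} = \operatorname{spec}(BA)\setminus\{0\}$, cyclic rotations of $w$ preserve the nonzero spectrum of $V_w$, so I may assume either $w = 0^L$ (in which case $\rho(V_w) = \rho(V_0)^L = 1$) or that cyclically $w = 1 0^{c_1} 1 0^{c_2} \cdots 1 0^{c_r}$ with $r \geq 1$ and $c_i \geq 1$.  Substituting $V_1 = EF$ throughout and applying the $\operatorname{spec}(AB)$--$\operatorname{spec}(BA)$ identity one more time yields
\begin{equation*}
	\rho(V_w) = \rho\bigl( G(c_1) G(c_2) \cdots G(c_r) \bigr) , \qquad L = r + \textstyle\sum_{i=1}^{r} c_i .
\end{equation*}

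For the upper bound I would estimate the Euclidean operator norm of each $G(c)$.  A short computation shows that $G(c)^\top G(c)$ has trace $\lceil c/2\rceil^2 + \lfloor c/2\rfloor^2 + 2$ and determinant $(\lceil c/2\rceil - \lfloor c/2\rfloor)^2 \in \{0, 1\}$, from which one verifies the uniform bound
\begin{equation*}
	\|G(c)\|_{\operatorname{op}}^{\,2} \leq \varphi^{c+1} .
\end{equation*}
Via the Lucas identity $\varphi^{2m} + \varphi^{-2m} = L_{2m}$ this reduces to $\lceil c/2\rceil^2 + \lfloor c/2\rfloor^2 + 2 \leq L_{c+1}$, a quadratic-versus-exponential comparison that is tight only at $c \in \{1, 3\}$.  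Submultiplicativity of the operator norm then gives
\begin{equation*}
	\rho\bigl( G(c_1)\cdots G(c_r) \bigr) \leq \prod_{i=1}^{r} \varphi^{(c_i + 1)/2} = \varphi^{L/2} ,
\end{equation*}
so $\rho_L^{1/L} \leq \varphi^{1/2}$ for every $L$, whence $\rho(V_0, V_1) \leq \varphi^{1/2}$.

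For the matching lower bound the case $c = 3$ of the preceding calculation is extremal: $G(3) = \begin{pmatrix} 2 & 1 \\ 1 & 1 \end{pmatrix}$ has characteristic polynomial $X^2 - 3X + 1$, whose larger root is $\varphi^2$, so $\rho(V_0^3 V_1) = \varphi^2$.  Iterating gives $\rho\bigl((V_0^3 V_1)^n\bigr) = \varphi^{2n}$ for a product of length $4n$, so $\rho_{4n}^{1/(4n)} \geq \varphi^{1/2}$ and hence $\rho(V_0, V_1) \geq \varphi^{1/2}$.  The main obstacle I anticipate is the uniform sharp inequality $\|G(c)\|_{\operatorname{op}}^{\,2} \leq \varphi^{c+1}$: because equality holds at $c \in \{1, 3\}$, one cannot afford a lossy estimate, and the Fibonacci--Lucas comparison must be handled carefully at the boundary rather than via a cruder bound.
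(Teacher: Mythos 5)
Your proposal is correct and follows essentially the same route as the paper: your $G(c)$ is exactly the paper's $Z_{c+1}$, and both arguments reduce a product $V_w$ to a product of these $2\times 2$ blocks, bound each block's spectral norm by $\varphi^{(c+1)/2}$, apply submultiplicativity, and use the word $0001$ (i.e., $\rho(G(3))=\varphi^2$) for the matching lower bound. The only differences are cosmetic: you obtain the $2\times 2$ reduction via the rank factorization $V_1=EF$ with $FE=0$ (giving an equality of nonzero spectra) where the paper uses an eigenvector-support argument (giving an inequality), and you settle the tight cases $c\in\{1,3\}$ of the norm bound via the determinant/Lucas-number comparison where the paper uses a trace bound for large $c$ plus a finite check.
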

\begin{proof}
	For every binary word $x = x_1 \cdots x_k$, let $V_x := V_{x_1} \cdots V_{x_k}$.
	For all positive integers $h$, we have that
	\begin{equation*}
		\big(\rho(V_{(0001)^h})\big)^{1/(4h)} = \big(\rho(V_{0001}^h)\big)^{1/(4h)} = \big(\rho(V_{0001})^h\big)^{1/(4h)} = \big(\rho(V_{0001}))^{1/4} = \varphi^{1/2} .
	\end{equation*}
	Hence, we get that $\rho(V_0, V_1) \geq \varphi^{1/2}$.
	Thus it suffices to prove that $\rho(V_x)^{1/k} \leq \varphi^{1/2}$ for every binary word $x = x_1 \cdots x_k$.
	Note that we have
	\begin{equation*}
		\big(\rho(V_{0^k})\big)^{1/k} = \big(\rho(V_0^k)\big)^{1/k} = \big(\rho(V_0)^k\big)^{1/k} = \rho(V_0) = 1 .
	\end{equation*}
	Hence, we can assume that $x \neq 0^k$.
	It is well known that, if $A$ and $B$ are $n \times n$ complex matrices, then $AB$ and $BA$ have the same eigenvalues.
	Consequently, by applying a circular shift to $x$, we can assume that $x_k = 1$.
	Moreover, since $V_1^2 = \mathbf{0}$, we can assume that $x$ does not have two consecutive $1$'s.
	Again since $V_1^2 = \mathbf{0}$, we get that if $x_1 = 1$ then $V_x^2 = \mathbf{0}$, and consequently $\rho(V_x) = 0$.
	Hence, we can assume that $x_1 = 0$ (and so $k \geq 2$).
	Therefore, we have that $x = (0^{k_1 - 1}1) \cdots (0^{k_s - 1}1)$, for some integers $k_1, \dots, k_s \geq 2$ such that $k_1 + \cdots + k_s = k$.
	
	Suppose that $\lambda \neq 0$ is an eigenvalue of $V_x$, and let $v$ be the corresponding eigenvector, so that $v V_x = \lambda v$.
	Since $x_k = 1$, we get that $v = (0 \; v_2 \; 0 \; v_4)$ for some $v_2, v_4 \in \mathbb{C}$.
	It follows easily by induction that
	\begin{equation*}
		(0 \; w_2 \; 0 \; w_3) \, V_{0^{h-1} 1}
		= (0 \; w_2 \; 0 \; w_3)
		\begin{pmatrix}
			0 & 0 & 0 & 0 \\
			0 & \lfloor h / 2 \rfloor & 0 & \lfloor (h-1) / 2 \rfloor \\
			0 & 0 & 0 & 0 \\
			0 & 1 & 0 & 1
		\end{pmatrix} ,
	\end{equation*}
	for all $w_2, w_4 \in \mathbb{C}$ and for every integer $h \geq 2$.
	Therefore, we ge that $v V_x = \lambda v$ is equivalent to $(v_2 \; v_4) Z_{k_1} \cdots Z_{k_s} = \lambda (v_2 \; v_4)$, where
	\begin{equation*}
		Z_h :=
		\begin{pmatrix}
			\lfloor h / 2 \rfloor & \lfloor (h-1) / 2 \rfloor \\
			1 & 1
		\end{pmatrix}
	\end{equation*}
	for every integer $h \geq 2$.
	Consequently, we obtain that
	\begin{equation}\label{equ:bound-rho-Tx}
		\rho(V_x) \leq \rho(Z_{k_1} \cdots Z_{k_s}) \leq \|Z_{k_1} \cdots Z_{k_s}\| \leq \|Z_{k_1}\| \cdots \|Z_{k_s}\| ,
	\end{equation}
	where $\|\cdot\|$ is the \emph{spectral norm}, which is defined by $\|A\| := \sqrt{\!\rho(A^{\mathsf{H}} A)}$ for every square matrix $A$ over $\mathbb{C}$, with $A^\mathsf{H}$ denoting the conjugate transpose.
	
	We claim that $\|Z_h\|^{1/h} \leq \varphi^{1/2}$ for every integer $h \geq 2$.
	Indeed, since the eigenvalues of $Z_h^{\mathsf{H}}\, Z_h$ are nonnegative real numbers, we have that
	\begin{equation*}
		\|Z_h\|^{1/h} = \big(\rho(Z_h^{\mathsf{H}}\, Z_h)\big)^{1/(2h)} \leq \big(\!\trace(Z_h^{\mathsf{H}}\, Z_h)\big)^{1/(2h)} \leq \left(\frac{h^2+4}{2}\right)^{1/(2h)} \leq \varphi^{1/2} ,
	\end{equation*}
	for every integer $h \geq 7$.
	Then the claim can be checked for $h \in \{2, \dots, 7\}$.
	
	Therefore, from~\eqref{equ:bound-rho-Tx} we get that
	\begin{align*}
		\big(\rho(V_x)\big)^{1/k} &\leq \big(\|Z_{k_1}\| \cdots \|Z_{k_s}\|\big)^{1/k} = \big(\|Z_{k_1}\|^{1/k_1}\big)^{k_1/k} \cdots \big(\|Z_{k_s}\|^{1/k_s}\big)^{k_s/k} \\
		&\leq \big(\varphi^{1/2}\big)^{k_1/k} \cdots \big(\varphi^{1/2}\big)^{k_s/k} = \big(\varphi^{1/2}\big)^{(k_1 + \cdots + k_2)/k} = \varphi^{1/2} ,
	\end{align*}
	as desired.
\end{proof}

Let $U_p^{\textsf{lex}}$ be the transition matrix of $\mathcal{B}_p$, where the states of $\mathcal{B}_p$ are sorted in lexicographic order.
Note that the matrices $U_p^{\textsf{lex}}$ and $U_p$ are similar, and  consequently $\rho(U_p^{\textsf{lex}}) = \rho(U_p) = \lambda_p$.
By the construction of $\mathcal{B}_p$, we have that $U_p^{\textsf{lex}} = V_0^{\otimes p} + V_1^{\otimes p}$.
Hence, by Theorem~\ref{thm:kronecker-power} and Lemma~\ref{lem:rho-T0-T1}, we get that
\begin{equation*}
	\lim_{p \to +\infty} \lambda_p^{1/p} = \lim_{k \to +\infty} \big(\rho(U_p^{\textsf{lex}})\big)^{1/p} = \lim_{k \to +\infty} \Big(\rho\big(V_0^{\otimes p} + V_1^{\otimes p}\big)\Big)^{1/p} = \rho(V_0, V_1) = \varphi^{1/2} .
\end{equation*}
The proof is complete.

\section*{Statements and Declarations}

\subsection*{Competing Interests}
The author declare that he has no known competing financial interests or personal relationships that could have appeared to influence the work reported.

\subsection*{Data availability statement}
No new data were created or analysed in this study.
Data sharing is not applicable to this article.

\section*{Acknowledgments} 
The author would like to thank the anonymous referees for carefully reading the paper.

\bibliographystyle{amsplain}


\begin{dajauthors}
\begin{authorinfo}[casa]
  Carlo Sanna\\
  Department of Mathematical Sciences, Politecnico di Torino\\
  Corso Duca degli Abruzzi 24, 10129 Torino, Italy\\
  carlo\imagedot{}sanna\imageat{}polito\imagedot{}it
\end{authorinfo}
\end{dajauthors}

\end{document}